\theoremstyle{plain}
\newtheorem{theorem}{Theorem}[section]
\newtheorem{lemma}{Lemma}[section]
\newtheorem{corollary}{Corollary}[section]
\theoremstyle{definition}
\newtheorem{definition}{Definition}[section]
\theoremstyle{remark}
\title{Maximal dissipation and shallow water flow
-- the dam-break problem}
\author{Dalal Daw, Marko Nedeljkov, Sanja Ru\v zi\v ci\' c}
\date{}
\begin{document}

\maketitle

\begin{abstract}
The shallow water system with a bed jump describes a fluid flow
over a dam, represented by a simple step function with the 
Riemann initial data is such that the vacuum is on the right-hand side. The main question
is whether the fluid will stay on the left-hand side of the dam. After
approximating the step function space derivative with a fixed
shadow wave, solutions to that problem stay bounded.  
A proper connection between waves on both sides of the dam, 
and a unique solution afterward is found by using the maximal dissipation principle.

\noindent
2020 {\it Mathematics Subject Classification}. 35L65, 76B15.

\end{abstract}

\section{Introduction}

The one-dimensional 
shallow water system with an uneven bed was given in \cite{KMT2017}
by using the following balance law system:
\begin{equation} \label{orig}
\begin{split}
& \partial_{t}h + \partial_{x}(uh) = 0 \text{ (mass conservation)} \\
& \partial_{t}(uh) + \partial_{x}\left( u^{2}h + g\frac{h^{2}}{2} \right)  = 
- ghb_{x}, \text{ (momentum balance)} \\
& \partial_{t}\left( h\frac{u^{2}}{2} + g\frac{h^{2}}{2} + bh \right) + 
\partial_{x}\left( guh(h + b) + h\frac{u^{3}}{2} \right)  = 0 
\text{ (energy conservation)}.
\end{split}
\end{equation}
A function $b=b(x)$ measures the rise of a bed, 
$h=h(x,t)$ measures the flow depth of the fluid, $u=u(x,t)$ is the
fluid velocity, and \(g\) is the gravity constant.
The initial data are assumed to be Riemann ones.

If the bed has a jump, the above problem is not 
solvable in the classical sense. A non-classical solution containing 
the Dirac delta function is constructed in the paper \cite{KMT2017}
by using all three equations in (\ref{orig}). Our aim is a bit different:
Find a bounded solution to the first two equations and use the energy 
relation as an admissibility condition. We believe the bounded 
solution is more realistic in flooding problems we are interested in.

For simplicity, assume that there is a single jump at $x=0$
\[
b(x)=\begin{cases}
b_{0}, & x<0 \\
b_{1}, & x>0.
\end{cases}
\]
That jump is a toy model of a dam. The natural assumption for it 
is $b_{1}>b_{0}$ without
the fluid on the right-hand side ($x>0$). 
The main question posed in this paper is to find  
conditions for the fluid not to pass over the dam. 
Mathematically, there are no non-vacuum solutions when $x>0$. 
The source function in (\ref{orig}) is 
$b_{x}=(b_{1}-b_{0})\delta(x)$, where $\delta$ stands for 
the Dirac delta function being
approximated by a zero-speed shadow wave, see (\cite{MN2010})
for that kind of solution. 
We obtain a solution by solving the mass conservation and momentum balance 
equations using the standard shadow wave computations given in that paper. 
An inequality substitutes the energy equation 
to single out a physically relevant solution.
More precisely, we will use the maximal dissipation 
admissibility condition of the energy functional 
$ h\frac{u^{2}}{2} + g\frac{h^{2}}{2} + bh$.
Dafermos propose the method in \cite{D1973}. 
One can see the papers \cite{F2014} and \cite{NR2022}  
for recent adaptations of this method.

The system analysis obtained in this way shows that there are no 
delta functions in any of the variables representing flow depth and velocity,
contrary to the results in \cite{KMT2017}. Both approximations
of $h$ and $u$ remain bounded, but the solution is not a classical
one. It can be viewed as a singular shock solution (with no point mass)
followed by a rarefaction wave. One can see such a solution
in \cite{KK1995} for the first time. 

The shadow wave approach may produce some unreasonable results. 
The maximal dissipation admissibility condition was
powerful enough to extract a unique solution here. 

Even for such simple (but still singular) cases, there are
a lot of technical problems.
The ``normal'' Riemann problem with arbitrary initial flow depth for $x>0$ 
is more complicated and not considered in this paper. 
A real challenge would be the use of 
this method in a general case of an uneven bed. We leave that problem for future work. 

The paper is organized as follows. The bed approximation is introduced
in the second section. One can find conditions for connecting waves
from the left-hand and right-hand sides of the jump there. 
That procedure resembles the one for a conservation law with 
a discontinuous flux (see  \cite{BK2008}, for example): 
The maximum dissipation admissibility condition
around the point of discontinuity at $x=0$ 
determines a proper connection between waves on both sides. 
The details are in the third section. The fourth one is devoted to 
analyzing all possible wave combinations going from the left having  
negative velocities. The final answer to the dam problem is in 
the fifth section.

\section{The bed approximation}

Take the first two equations from the system (\ref{orig}) with the
Riemann data:
\begin{equation} \label{sys}
\begin{split}
& \partial_{t}h + \partial_{x}(uh) = 0 \\
& \partial_{t}(uh) + \partial_{x}\left( u^{2}h + g\frac{h^{2}}{2} \right) = - ghb_{x}, \\
& (h,u)|_{t=0} = \begin{cases}
(h_{l},u_{l}), & x < 0 \\
(h_{r},u_{r}), & x > 0. 
\end{cases}
\end{split}
\end{equation}
The source term obtained because of the bed jump is approximated by
\[\partial_{x}b_{\varepsilon} = \begin{cases}
0, &  x \leq - \frac{\varepsilon}{2}t \\
\frac{b_{1} - b_{0}}{\varepsilon t}, &  - \frac{\varepsilon}{2}t < x \leq \frac{\varepsilon}{2}t \\
0, &  x > \frac{\varepsilon}{2}t. 
\end{cases}\]

The idea is to look for a solution in the form of a simple shadow wave
defined in \cite{MN2010}:
\begin{equation} \label{sdw}
h_{\varepsilon} = \begin{cases}
h_{0}, &  x \leq - \frac{\varepsilon}{2}t \\
x_{\varepsilon}, & - \frac{\varepsilon}{2}t < x 
\leq \frac{\varepsilon}{2}t \\
h_{1}, &  x > \frac{\varepsilon}{2}t, 
\end{cases} \quad u_{\varepsilon} = \begin{cases}
u_{0}, & x \leq - \frac{\varepsilon}{2}t \\
v_{\varepsilon}, &  - \frac{\varepsilon}{2}t 
< x \leq \frac{\varepsilon}{2}t \\
u_{1}, &  x > \frac{\varepsilon}{2}t. 
\end{cases} 
\end{equation}

A shadow wave is a piecewise smooth function depending on a
small parameter $\varepsilon \ll 1$. ``Simple'' means 
that it is represented by a piecewise constant function
with respect to $x$.
Discontinuities lie on some straight lines, as shown in (\ref{sdw}). 
The procedure for finding a shadow wave that is an approximate solution
to the given problem consists of several steps:
\medskip

\noindent - Substitute (\ref{sdw}) in (\ref{sys}).

\noindent - Calculate all expressions using standard Rankine-Hugoniot
conditions for shocks.

\noindent - Let $\varepsilon \to 0$. If equality of 
distributional limits holds, then (\ref{sdw}) is a solution.
\medskip

Note that one can use a slightly different approximation, using 
the strip $\left[-\frac{\varepsilon}{2},\frac{\varepsilon}{2}\right]$, $t>0$,
instead of the conic set $\left[-\frac{\varepsilon}{2}t,\frac{\varepsilon}{2}t\right]$, $t>0$.
Then, the term $b_{x}$ would be approximated by
\[\partial_{x}b_{\varepsilon} = \begin{cases}
0, &  x \leq - \frac{\varepsilon}{2} \\
\frac{b_{1} - b_{0}}{\varepsilon}, &  - \frac{\varepsilon}{2} < x \leq \frac{\varepsilon}{2} \\
0, &  x > \frac{\varepsilon}{2}, 
\end{cases}\]
while $x_{\varepsilon}$ is substituted by $x_{\varepsilon}t$,
and $v_{\varepsilon}$ by $v_{\varepsilon}t$ in (\ref{sdw}). 
We are using the same approximation as in \cite{MN2010} and 
the statements from it. More precisely, 
by Lemma 2.1 from the cited paper, the first equation in (\ref{sys}) 
is satisfied if and only if
\begin{equation*}
\partial_{t}h_{\varepsilon} \approx \varepsilon x_{\varepsilon}\delta, 
\text{ and } \partial_{x}\left( h_{\varepsilon}u_{\varepsilon} \right) 
\approx \left( h_{1}u_{1} - h_{0}u_{0} \right)\delta + \varepsilon 
tx_{\varepsilon}v_{\varepsilon}\delta^{'}.
\end{equation*}
And the second one, if and only if 
\begin{equation*}
\begin{split}
\partial_{t}\left( h_{\varepsilon}u_{\varepsilon} \right) \approx 
& \varepsilon x_{\varepsilon}v_{\varepsilon}\delta, \text{ and }\\
\partial_{x}\left( h_{\varepsilon}u_{\varepsilon}^{2}
+ \frac{g}{2}h_{\varepsilon}^{2} \right) \approx & \left( h_{1}u_{1}^{2} 
+ \frac{g}{2}h_{1}^{2} - h_{0}u_{0}^{2} - \frac{g}{2}h_{0}^{2} \right)\delta
+ \varepsilon t\left( x_{\varepsilon}v_{\varepsilon}^{2} 
+ \frac{g}{2}x_{\varepsilon}^{2} \right)\delta^{'}.
\end{split}
\end{equation*}
The sign ``$\approx$'' simply means a distributional limit as
$\varepsilon \to 0$. 
The source term in the second equation,
$S=-gh\partial_{x}b_{\varepsilon}$, applied to a test function 
$\varphi\in \mathcal{C}^{\infty}(\mathbb{R}\times \mathbb{R}_{+})$ gives
\[ 
\begin{split} \left\langle S,\varphi \right\rangle = & \left\langle - ghb_{x},
\varphi \right\rangle  \\ 
= & - g \int_{-\frac{\varepsilon}{2}t}^{\frac{\varepsilon}{2}t}
{x_{\varepsilon}\frac{b_{1} - b_{0}}{\varepsilon t}}\varphi(x,t)dxdt  \\ 
= & - g\frac{b_{1} - b_{0}}{\varepsilon}
\int_{0}^{\infty} \int_{- \frac{\varepsilon}{2}t}^{\frac{\varepsilon}{2}t}
\frac{x_{\varepsilon}}{t} 
\left( \varphi(0,t) + x\partial_{x}\varphi(0,t) 
+ \mathcal{O}\left( x^{2} \right) \right)dx dt  \\ 
\approx & \int_{0}^{\infty}{- g\frac{b_{1} - b_{0}}{\varepsilon}
\frac{x_{\varepsilon}}{t}}\varepsilon t\varphi(0,t)dt
= - \int_{0}^{\infty}g(b_{1} - b_{0})x_{\varepsilon}\varphi(0,t)dt. 
\end{split} \]
That is
\[S \approx - g\left( b_{1} - b_{0} \right)x_{\varepsilon}\delta.\]
The delta function and its 
derivative are supported by the line \(x = 0\).
There is no way to cancel  the \(\delta^{'}\) term 
unless \( \lim_{\varepsilon \to 0} \varepsilon {x_{\varepsilon} 
= 0}\), and  
\( \lim_{\varepsilon \to 0} v_{\varepsilon} = 0.\)
Particularly, it means that there are no delta functions in the
limit of the approximation $(h_{\varepsilon},u_{\varepsilon})$. 
With \(x_{\varepsilon} = \chi \geq 0\), $v_{\varepsilon}=0$,
and taking the distributional limit, the system (\ref{sys}) 
is satisfied under two conditions:

\noindent
-- The first one is \(h_{1}u_{1} = h_{0}u_{0}\). Note that it implies 
\( \mathop{\rm sign}{(u}_{0}) 
= \mathop{\rm sign}{(u}_{1})\).

\noindent
-- The second one is 
\(h_{1}u_{1}^{2} + \frac{g}{2}h_{1}^{2} - h_{0}u_{0}^{2} - \frac{g}{2}h_{0}^{2} 
= - g{(b}_{1} - b_{0})\chi.\)
An arbitrary constant \(\chi \) is non-negative due 
to the obvious physical reasons since a negative fluid depth is meaningless.

From the first relation, we have
\begin{equation} \label{u1}
u_{1} = u_{0}\dfrac{h_{0}}{h_{1}}.
\end{equation}
Substituting such \(u_{1}\) in the second one, gives
\[u_{0}^{2}\frac{h_{0}^{2}}{h_{1}} + \frac{g}{2}h_{1}^{2} - h_{0}u_{0}^{2} - \frac{g}{2}h_{0}^{2} = - g{(b}_{1} - b_{0})\chi. \]
Multiplication by \(h_{1}\) transfers it to the cubic equations
with respect to $h_{1}$, 
\[\frac{g}{2}h_{1}\left( h_{1}^{2} - h_{0}^{2} \right) 
- u_{0}^{2}{h_{0}(h}_{1} - h_{0}) + g{(b}_{1} - b_{0})\chi h_{1} = 0\]
or equivalently, 
\begin{equation} \label{h1}
h_{1}^{3} - \left( h_{0}^{2} + \frac{2}{g}h_{0}u_{0}^{2} - 2(b_{1}-b_{0}) 
\chi  \right) h_{1} + \frac{2 h_{0}^{2}u_{0}^{2}}{g}  = 0.
\end{equation}

Note that the cubic equation is of the form 
\( h_{1}^{3} - Ah_{1} + B = 0 \). A simpler case when  \( A,\ B \geq 0\)
occurs only if \(b_{1} < b_{0}\), and that case  is not interesting here. 
Otherwise, a sign of \(A\) depends on particular initial
data. A short analysis of the above cubic equation follows.
Put 
\[
C(x):=x^{3}-\left( h_{0}^{2}+\frac{2}{g}h_{0}u_{0}^{2}-2(b_{1}-b_{0})\chi\right)x
+\frac{2u_{0}^{2}h_{0}^{2}}{g}=0,
\]
and look for a positive root of this equation. 
Denote $A:=h_{0}^{2}+\frac{2}{g}h_{0}u_{0}^{2}-2(b_{1}-b_{0})\chi$.
The function $C$ is positive for $\chi$
large enough, and there are no non-negative roots since
$C(0)=\frac{2u_{0}^{2}h_{0}^{2}}{g}>0$ and $C(\infty)=\infty$. 
For smaller values of $\chi$, $C$ attains its positive minimum at the point
\[
x_{e}=\sqrt{\frac{A}{3}},\; \text{ provided that }\chi
\leq \frac{h_{0}^{2}+\frac{2}{g}h_{0}u_{0}^{2}}
{2(b_{1}-b_{0})}.
\]
So, a positive solution will exist if 
$C(x_{e})=\dfrac{A}{3}\sqrt{\dfrac{A}{3}}-A\sqrt{\dfrac{A}{3}}
+\dfrac{2u_{0}^{2}h_{0}^{2}}{g} \leq 0$, i.e.\ if $A^{3}\geq 
\dfrac{27}{g^{2}}u_{0}^{4}h_{0}^{4}$. That gives the following conditions
\begin{equation} \label{chi}
\chi \leq  \frac{h_{0}^{2}+\frac{2}{g}h_{0}u_{0}^{2}-\frac{3}{g^{2/3}}
u_{0}^{4/3}h_{0}^{4/3}}{2(b_{1}-b_{0})}=:\overline{\chi}
\end{equation}
and
\begin{equation} \label{0}
h_{0}+\frac{2}{g}u_{0}^{2}  \geq  \frac{3}{g^{2/3}}
u_{0}^{4/3}h_{0}^{1/3},
\end{equation}
since the flow depth must be non-negative, i.e., $\chi \geq 0$.
Using $y:=\frac{u_0^2}{gh_0}>0$, (\ref{0}) reads as
\[1+2y\geq 3\sqrt[3]{y^2}.\]
Put $\tilde{f}(y):=1+2y-3\sqrt[3]{y^2}.$ Then, $\tilde{f}(0)=1>0$, $f(1)=0$ and 
\[\tilde{f}'(y)=2\Big(1-\frac{1}{\sqrt[3]{y}}\Big)\begin{cases}
<0, & \text{ if } y<1\\
>0 & \text{ if } y>1.
\end{cases}\]
So, $\min_{y\geq 0} \tilde{f}(y)=f(1)=0$ and (\ref{0}) is always satisfied.

For a solution to the initial data problem (\ref{sys}), one has to find proper
\( (h_{1},u_{1}) \) and \( (h_{0}, u_{0}) \) such that (\ref{u1}) holds  for some \(\chi \geq 0\). 
The next step would be to connect states (\(h_{l}{,u}_{l}\)), 
\((h_{0},u_{0})\) by waves with negative velocities, 
and \( (h_{1},u_{1}) \), \( (h_{r},u_{r})\)
by ones with positive velocities. These waves are
shocks and rarefaction solutions to the shallow water system with an even bed
\begin{equation} \label{even}
\begin{split}   & \partial_{t}h + \partial_{x}(uh) =  0 \\
& \partial_{t}(uh) + \partial_{x}\left( u^{2}h + g\frac{h^{2}}{2} \right) =  0
\end{split}
\end{equation}

Let us find conditions for the solution to satisfy the entropy condition 
around the bed jump ($x=0$). 

\section{Admissibility conditions around the bed jump}

The energy balance relation will be used to find a way to connect
classical solutions from both sides of $x=0$. That will reveal 
necessary conditions which 
\(h_{0}{,h_{1},u}_{0},u_{1},\ x_{\varepsilon},\ v_{\varepsilon}\)
have to satisfy. 

The energy balance in the third equation in (\ref{sys}) reads as
\[
\begin{split}
& \partial_{t} \eta(h,u) + \partial_{x} Q(h,u) \\
= & \partial_{t}\left( h\frac{u^{2}}{2} + g\frac{h^{2}}{2} + bh \right)
+ \partial_{x}\left( guh(h + b) + h\frac{u^{3}}{2} \right) = 0,
\end{split}
\]
where $\eta$ denotes the energy, and $Q$ its flux. 
The solution around a bed jump has to satisfy (in the distributional
sense) the Lax entropy condition
\[
\partial_{t}\left( h_{\varepsilon}\frac{u_{\varepsilon}^{2}}{2} 
+ g\frac{h_{\varepsilon}^{2}}{2} + b_{\varepsilon}h_{\varepsilon} \right) 
+ \partial_{x}\left( gu_{\varepsilon}h_{\varepsilon}\left( h_{\varepsilon} 
+ b_{\varepsilon} \right) + h_{\varepsilon}\frac{u_{\varepsilon}^{3}}{2} \right) 
\leq 0,  \text{ as } \varepsilon \rightarrow 0.
\]

Using Lemma 10.1 from \cite{MN2010}, and denoting
\( \beta_{\varepsilon} = \dfrac{b_{1} - b_{0}}{\varepsilon t}\ \), we
obtain
\[\begin{split}
\partial_{t}\left( h_{\varepsilon}\frac{u_{\varepsilon}^{2}}{2} 
+ g\frac{h_{\varepsilon}^{2}}{2} + b_{\varepsilon}h_{\varepsilon} \right) 
& \approx \frac{d}{dt}\Big(\varepsilon t\left( x_{\varepsilon}\frac{v_{\varepsilon}^{2}}{2} 
+ g\frac{x_{\varepsilon}^{2}}{2} + \beta_{\varepsilon}x_{\varepsilon} \right)\Big)
\delta\\
& \approx \frac{d}{dt}\big((b_1-b_0)x_\varepsilon\big)\delta 
\approx 0
\end{split}
\]
and 
\[ 
\begin{split} 
& \partial_{x}\left( gu_{\varepsilon}h_{\varepsilon}
\left( h_{\varepsilon} + b_{\varepsilon} \right) 
+ h_{\varepsilon}\frac{u_{\varepsilon}^{3}}{2} \right) \\
\approx & \left( g\left( u_{1}h_{1}\left( h_{1} + b_{1} \right) 
- u_{0}h_{0}\left( h_{0} + b_{0} \right) \right) \right)\delta 
+ \varepsilon t(gv_{\varepsilon}x_{\varepsilon}\left( x_{\varepsilon} 
+ \beta_{\varepsilon} \right) 
+ x_{\varepsilon}\frac{v_{\varepsilon}^{3}}{2})\delta^{'}
\end{split} 
\]
Since \(\delta^{'}\) changes its sign, the term
\(\varepsilon t\left(gv_{\varepsilon}x_{\varepsilon}
\left( x_{\varepsilon} + \beta_{\varepsilon} \right) 
+ x_{\varepsilon}\frac{v_{\varepsilon}^{3}}{2}\right)\)
has to converge to zero as \(\varepsilon \rightarrow 0\). More
precisely,\(\ v_{\varepsilon} = 0\) or \(x_{\varepsilon} = 0\) because
\(\varepsilon t\beta_{\varepsilon} \rightarrow b_{1} - b_{0} \neq 0\).
The condition \(x_{\varepsilon} \approx 0 \) would mean that there is no water around the origin, 
while \(v_{\varepsilon} = 0\) is more 
physically reasonable 
-- water stops moving at the exact bed jump place.
Note that the second condition is also necessary for the existence
of shadow wave solution as seen in the previous section, and let us use 
the notation from there.
Put  \(x_{\varepsilon} = \chi \geq 0\) and \(v_{\varepsilon} = 0\) 
in the rest of the paper. 
Then, the entropy inequality reads as
\[ g\left( u_{1}h_{1}\left( h_{1} + b_{1} \right) 
- u_{0}h_{0}\left( h_{0} + b_{0} \right) \right) \leq 0.\]
With \(\ h_{1}u_{1} = h_{0}u_{0}\), the entropy condition reduces to
\begin{equation} \label{ent}
h_{1} + b_{1} \leq h_{0} + b_{0}, \quad \text{ i.e. } h_{1} \leq h_{0} - \left( b_{1} - b_{0} \right).
\end{equation}
That is, a fluid depth on the right-hand side of the dam is less
than the difference between the fluid depth on the left-hand side
and the dam's size, which appears to be physically realistic.
This condition is independent of the specific choice of $\chi$
that satisfies (\ref{chi}), meaning it does
not provide enough precision to extract a unique solution.
To address this, we will apply Dafermos'  maximal dissipation  
(see \cite{D1973}, \cite{D2012}, or \cite{F2014})
to obtain uniqueness. This approach was successfully used
for the shadow wave solutions in \cite{NR2022},
and we will adopt the results and notation from that paper.

The total energy of a solution $U=(h,u)$ in the interval $[-L,L]$ 
at time $t>0$ is given by 
\[ H_{[-L,L]}(U(\cdot,t)):=\int_{-L}^{L}\eta(U(x,t))dx,\]
A value $L>0$ is large enough to ensure that $U$ is constant
out of $[-L,L]$.

\begin{definition}(\cite{D1973}) \label{d_optim}
A weak solution $U=(h,u)$ to the system (\ref{sys})
satisfies the energy admissibility condition on $\mathbb{R}\times (0,T]$
if there is no solution $\bar{U}$ such that for some $\tau\in [0,T)$ we have
$U=\bar{U}$, $t<\tau$ and $\frac{d}{dt} 
H_{[-L,L]}(\bar{U}(\cdot,\tau))
<\frac{d}{dt}H_{[-L,L]}(U(\cdot,\tau))$. 
The derivatives are assumed to be forward in time.
\end{definition}

The energy production for a shock wave 
\[ 
U(x,t)=\begin{cases}U_{0}(x,t), & x<c(t) \\
U_{1}(x,t), & x>c(t)
\end{cases}
\]
is defined by
\[
\begin{split}
\frac{d}{dt}H_{[-L,L]}(t) = & -c'(t)
\big( \eta(U_{1}(c(t),t)) - \eta(U_{0}(c(t),t))\big) \\
& +\big( Q(U_{1}(c(t),t)) - Q(U_{0}(c(t),t)) \big) 
+Q(U_{0}(-L,t)) - Q(U_{1}(L,t)) \\
=: & \mathcal{D}(t)+Q(U_{0}(-L,t)) - Q(U_{1}(L,t)). 
\end{split}
\]
Note that the term $Q(U_{0}(-L,t)) - Q(U_{1}(L,t))$ is fixed due to
the initial data. So, only the local energy production 
denoted by $\mathcal{D}(t)$ 
is important for Definition \ref{d_optim}. 

The energy production for a rarefaction wave is 
$\frac{d}{dt}H_{[-L,L]}(t)=Q(U_{0}(-L,t)) - Q(U_{1}(L,t))$
(see \cite{D2012}), i.e.\ $\mathcal{D}(t)$ equals zero due to a continuity 
of the rarefaction wave.

\begin{definition}
Suppose that there exists a solution to the Riemann problem
with left and right initial states $U_l=(h_l,u_l)$ and $U_1=(h_1,u_1)$,
respectively and that it consists of elementary waves with negative speed.
Also, suppose that there exists a solution to the Riemann problem with
left and right side initial data $U_1$ and $U_r$, 
consisting of waves with positive speed. Then a  pair $(U_0,U_1)$ 
is called connection.
\end{definition}
The connection appears in equations with discontinuous jump (\cite{BK2008}).

\begin{lemma} \label{lemma1}
For a given $(h_{0},u_{0})$ on the left-hand side of the bed jump,
satisfying $h_{0}>b_{1}-b_{0}>0$, $u_{0}>0$ and
\begin{equation}\label{exist}
[b]=b_1-b_0\leq \frac{h_0}{2}(3-\sqrt{1+8y}), \quad y:=\frac{u_0^2}{gh_0}\leq 1,
\end{equation}
there exists a unique choice
up to $\varepsilon$ of the right-hand values $(h_{1},u_{1})$ 
that minimizes the energy
production of the static shadow wave at $x=0$  and satisfies the entropy condition (\ref{ent}).
The right-hand values are 
\begin{equation}\label{opth1}
h_1=\begin{cases}
\sqrt[3]{\frac{h_{0}^{2}u_{0}^{2}}{g}}, & \text{ if }
\sqrt[3]{\frac{h_{0}^{2}u_{0}^{2}}{g}}\leq h_0-(b_1-b_0)\\
h_0-(b_1-b_0), & \text{ otherwise }
\end{cases},\quad u_{1}=\frac{h_{0}u_{0}}{h_{1}}.
\end{equation}

\end{lemma}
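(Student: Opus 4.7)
The plan is to express the local energy production $\mathcal{D}(t)$ of the static shadow wave at $x=0$ as a function of $h_1$ alone, and then minimize it over the set of entropy-admissible right states. Since the dissipation center sits on the vertical line $x=0$, one has $c'(t)=0$, so only the flux jump contributes. Using $v_\varepsilon = 0$, $x_\varepsilon=\chi$, and the mass relation $h_1u_1=h_0u_0=:m$, I would reduce
\[
\mathcal{D}(h_1)=h_0u_0\Bigl[g\bigl(h_1-h_0+(b_1-b_0)\bigr)+\tfrac{u_0^2}{2}\Bigl(\tfrac{h_0^2}{h_1^2}-1\Bigr)\Bigr],
\]
which depends only on $h_1$ once $h_0,u_0,b_1-b_0$ are fixed. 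Routine differentiation gives
\[
\mathcal{D}'(h_1)=h_0u_0\Bigl(g-\tfrac{h_0^2u_0^2}{h_1^3}\Bigr),\qquad \mathcal{D}''(h_1)>0,
\]
so $\mathcal{D}$ is strictly convex on $(0,\infty)$ with a unique interior minimizer $h_1^{\ast}=\sqrt[3]{h_0^2u_0^2/g}$.

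Next, I would identify the range of admissible $h_1$. The cubic (\ref{h1}) parametrizes $h_1$ by $\chi\in[0,\overline{\chi}]$. At $\chi=0$, the root $h_1=h_0$ factors out, leaving the quadratic $h_1^2+h_0h_1-2h_0u_0^2/g=0$ whose positive root is $x_+:=\tfrac{h_0}{2}(\sqrt{1+8y}-1)$. At $\chi=\overline{\chi}$ the two positive roots coalesce at the double root $x_e=\sqrt{A/3}$, and a short computation using the formula for $\overline{\chi}$ shows $x_e=h_1^{\ast}$. By the intermediate value theorem, as $\chi$ varies in $[0,\overline{\chi}]$ the positive roots sweep out exactly the interval $[x_+,h_0]$, and $h_1$ determines $\chi$ uniquely (and hence $u_1=h_0u_0/h_1$). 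The key algebraic check $x_+\le h_1^{\ast}\le h_0$ under $y\le 1$ reduces, with $z=y^{1/3}$, to $(2z+1)(z-1)\le 0$, which holds iff $z\le 1$. Hypothesis (\ref{exist}) is then exactly the rearrangement $h_0-(b_1-b_0)\ge x_+$, so the entropy bound (\ref{ent}) intersected with $[x_+,h_0]$ produces the nonempty admissible interval $[x_+,\,h_0-(b_1-b_0)]$.

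The final step is a one-variable constrained minimization. Since $\mathcal{D}$ is strictly decreasing on $[x_+,h_1^{\ast}]$ and strictly increasing on $[h_1^{\ast},h_0]$, the minimizer on $[x_+,h_0-(b_1-b_0)]$ is the interior point $h_1^{\ast}$ whenever $h_1^{\ast}\le h_0-(b_1-b_0)$, and otherwise the right endpoint $h_0-(b_1-b_0)$. This yields precisely the piecewise formula (\ref{opth1}), and $u_1=h_0u_0/h_1$ follows from mass conservation. Uniqueness is immediate from the strict convexity of $\mathcal{D}$ together with the injectivity of $h_1\mapsto\chi$.

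The main obstacle I anticipate is not the optimization itself but the structural analysis of the cubic: verifying that the two positive branches exactly fill $[x_+,h_0]$, identifying the coalescence value $x_e$ with the unconstrained minimizer $h_1^{\ast}$, and recognizing (\ref{exist}) as the precise algebraic condition that keeps the full lower branch entropy-admissible. Once those identifications are in hand, the rest is a textbook convex-minimization case split.
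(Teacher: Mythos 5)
Your proposal is correct and follows essentially the same route as the paper: reduce the local energy production to the one-variable function of $h_1$ via $h_1u_1=h_0u_0$, locate the unconstrained minimizer $\sqrt[3]{h_0^2u_0^2/g}$, characterize the admissible range of $h_1$ through the sign of $\chi(h_1)$ (yielding the lower bound $\tfrac{h_0}{2}(\sqrt{1+8y}-1)$ and the role of condition (\ref{exist})), and conclude by a monotonicity/case split against the entropy bound $h_0-(b_1-b_0)$. Your explicit identification of the double root of the cubic with the minimizer and the check $(2z+1)(z-1)\le 0$ are slightly more detailed than the paper's argument, but the substance is the same.
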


\begin{proof}
Assume that $(h_{0},u_{0})$ and $(h_{1},u_{1})$ are known.
Applying the above procedure from \cite{NR2022} 
to (\ref{sys}) and its solution (\ref{sdw})
around the origin with
\[ 
\eta=h\frac{u^{2}}{2}+\frac{g}{2}h^{2}+bh, \text{ and } 
Q=ghu(h+b)+h\frac{u^{3}}{2}
\]
one gets
\[
\begin{split}
\mathcal{D}(t) = & \frac{\varepsilon}{2}\left(\eta(\chi,0) 
- \eta(h_{0},u_{0})\right)+Q(\chi,0)-Q(h_{0},u_{0}) \\
& - \frac{\varepsilon}{2}\left( \eta(h_{1},u_{1}) 
-\eta(\chi,0) \right) +Q(h_{1},u_{1})-Q(\chi,0) \\
= & Q(h_{1},u_{1}) - Q(h_{0},u_{0}) 
-\frac{\varepsilon}{2}\left(\eta(h_{0},u_{0}) 
-\eta(h_{1},u_{1})\right) \\
\approx & Q(h_{1},u_{1}) - Q(h_{0},u_{0}) \\
= & gh_{1}u_{1}(h_{1}+b_{1})+h_{1}\frac{u_{1}^{3}}{2}
- gh_{0}u_{0}(h_{0}+b_{0}) - h_{0}\frac{u_{0}^{3}}{2}. 
\end{split}
\]
Now, we have to find a positive $\chi$ satisfying (\ref{chi}) such that 
$\mathcal{D}(t)$ is minimal. Denote by $H_{0}$ a part of $\mathcal{D}$
depending on $\chi$,
\[
H_{0}=gh_{1}u_{1}(h_{1}+b_{1})+h_{1}\frac{u_{1}^{3}}{2}
=gh_{0}u_{0}(h_{1}+b_{1})+\frac{h_{0}^{3}u_{0}^{3}}{2h_{1}^{2}},
\] 
where we have used (\ref{u1}). Then 
\[\frac{\partial H_0}{\partial h_1}=gh_{0}u_{0}
-\frac{h_{0}^{3}u_{0}^{3}}{h_{1}^{3}}.\]
$H_0$ has a stationary point at $\bar{h}_1$, where
\[
\overline{h}_{1}=\sqrt[3]{\frac{h_{0}^{2}u_{0}^{2}}{g}}.
\]
So, if $u_0>0$, the function $H_0$ is decreasing for
$h_1<\bar{h}_1$ and increasing for $h_1>\bar{h}_1$.
Thus, the minimum of $H_0$ is reached at $h_1=\bar{h}_1$. 
Moreover, from (\ref{h1}) we have
\[\chi(\bar{h}_1)=\bar{\chi}=\frac{h_0^2+\frac{2}{g}h_0u_0^2
-\frac{3}{\sqrt[3]{g^2}}\sqrt[3]{u_0^4h_0^2}}{2[b]}
=\frac{h_0^2}{2[b]}(1+2y-3\sqrt[3]{y}), \quad y=\frac{u_0^2}{gh_0}.\]
Due to (\ref{0}), $\bar{\chi}$ is positive. If $\bar{h}_1\leq h_0-[b],$
 the corresponding SDW is entropic. 

If $\bar{h}_1>h_0-[b],$ then $H_0$ has the minimum which satisfies 
the entropy condition at some $\tilde{h}_1\leq h_0-[b]$. 
We have to check if the sign of the value $\chi(\tilde{h}_1)$ 
defined by (\ref{h1}) is non-negative. We have,
\[\chi(h_1)=\frac{1}{2[b]}\Big((1+2y)h_0^2-h_1^2-2y\frac{h_0^3}{h_1}\Big)\geq 0\]
if $2yh_0^2\frac{h_0-h_1}{h_1}\leq h_0^2-h_1^2$.
The above inequality reduces to
\begin{equation}\label{inq}
(h_0-h_1)(h_1^2+h_1h_0-2yh_0^2h_1)\geq 0.
\end{equation}
The function $k(h_1):=h_1^2+h_1h_0-2yh_0^2h_1$ is negative for 
\[h_1\in\big(\frac{h_0}{2}(-\sqrt{1+8y}-1),\frac{h_0}{2}(\sqrt{1+8y}-1)\big),\]
and non-negative otherwise. So, if $y\leq1$ the inequality (\ref{inq}) holds for 
\[h_1\in\big[\frac{h_0}{2}(\sqrt{1+8y}-1), h_0\big].\]
Note that it is not necessary to consider $h_1>h_0$ since, in that case, 
the entropy condition does not hold.
If $y>1$, (\ref{inq}) does not hold for any $h_1\geq 0$ that satisfies the entropy condition.
Thus,  $\tilde{h}_1$ exists if $y\leq 1$ and 
\[\frac{h_0}{2}(\sqrt{1+8y}-1)\leq h_0-[b],\]
i.e.\
\[[b]\leq \frac{h_0}{2}(3-\sqrt{1+8y}).\]   

Otherwise, an entropic SDW at the origin does not exist, i.e.,
there is no entropic connection for a given data
Note that $\frac{d\chi}{dh_1}>0$ for any $h_1<\bar{h}_1$. That is,
if $\chi(\tilde{h}_1)<0$, then $\chi(h_1)<0$ for $h_1<\tilde{h}_1$, too.
Thus, if $\bar{h}_1>h_0-[b]$, the only possible minimum point is
exactly $\tilde{h}_1=h_0-[b]$.
\end{proof}
\medskip

That concludes the analysis around $x=0$, where a bed has a jump. 
The next step is to find $ h_0 $ such that the total entropy production is minimal 
and the connection satisfies the entropy condition. 
If it is not possible to find $h_0$ such that entropy condition holds for the choice 
$h_1=\sqrt[3]{\dfrac{h_{0}^{2}u_{0}^{2}}{g}}$, 
one has to find for which $h_0$ and $h_1$ taken such that $h_1=h_0-[b]$ 
the solution exist. Furthermore, if an admissible solution exists for both choices, 
the relation between the obtained optimal values of total entropy production
will determine the proper choice for $ h_0$.
Now, let us look at the classical waves that are away from the jump.

\subsection{ Elementary waves for homogenous system}

Şystem (\ref{sys}) is the same as the isentropic gas model with
\(h\ \) playing a role of density,
\[
\begin{split} 
& \partial_{t}h + \partial_{x}(uh) = 0\\
& \partial_{t}(uh) + \partial_{x}\left( u^{2}h + p(h) \right) = 
- g\partial_{x}bu,\; p(h) = \ g\frac{h^{2}}{2}.
\end{split} \]

The characteristics of that system are 
\[\lambda_{1,2} = u \pm \sqrt{p^{'}(h)}, \text{ that is } 
\lambda_{1} = u - \sqrt{gh}, \text{ and } \lambda_{2} = u + \sqrt{gh}. \]

The shock curves are given by
\[ 
\begin{split} 
S_{1}:\; &  u = u_{l} - \sqrt{\frac{g}{2h_{l}h}\left( h - h_{l} \right)
\left( h^{2} - h_{l}^{2} \right)} \\ & 
= u_{l} - \sqrt{\frac{g\left( h_{l} + h \right)}{2h_{l}h}} \left( h - h_{l} 
\right)= u_{l} - \sqrt{\frac{g}{2}}.\left( h - h_{l} \right)
\sqrt{\frac{1}{h} + \frac{1}{h_{l}}}, \; h > h_{l} \\
S_{2}:\; &  u = u_{l} - \sqrt{\frac{g}{2h_{l}h}\left( h - h_{l} \right)
\left( h^{2} - h_{l}^{2} \right)} \\ = & u_{l} - \sqrt{\frac{g}{2}} \left( h_{l} 
- h \right)\sqrt{\frac{1}{h} + \frac{1}{h_{l}}}, \; h < h_{l}.
\end{split}
\]
Wave speeds for the shocks are given by
\[
c = u_{l} - \sqrt{\frac{g}{2}
\left( \frac{1}{h} + \frac{1}{h_{l}} \right)} h 
= u_{l} - \sqrt{\frac{g\left( h + h_{l} \right)h}{2h_{l}}},
\text{ for } S_{1}
\]
and
\[
c = u_{l} + \sqrt{\frac{g\left( h + h_{l} \right)h}{2h_{l}}}, \text{ for } S_{2}.
\]

Assume that \(u_{l} > 0\), i.e., the water runs from left to right.
One can treat the opposite case analogously, but that is not important for
the dam problem.

The rarefaction curves are 
\[ 
\begin{split} 
R_{1}:\;   hu =  & hu_{l} + 2h\left( \sqrt{gh_{l}} - \sqrt{gh} \right),\;
h<h_{l} \\
R_{2}:\;  hu = & hu_{l} - 2h\left( \sqrt{gh_{l}} - \sqrt{gh} \right),\; 
h>h_{l} 
\end{split} 
\]
or
\[ 
\begin{split} 
R_{1}:\; u = & u_{l} + \sqrt{gh_{l}} - \sqrt{gh},\; h<h_{l} \\
R_{2}:\; u = & u_{l} - \sqrt{gh_{l}} + \sqrt{gh},\; h>h_{l} 
\end{split} 
\]
A rarefaction wave connecting states $(h_{l},u_{l})$ and 
$(h_0,u_0)$ is given by 
\[
(h,u)(x,t) = \begin{cases}
(h_{l},u_{l}), & \frac{x}{t} \leq \lambda_{i}{(h}_{l},u_{l}) \\
R_{i}(s), & \lambda_{i}{(h}_{l},u_{l}) \leq \frac{x}{t} = \lambda_{i}(s)
\leq \lambda_{i}{(h}_{0},u_{0}) \\
(h_{0},u_{0}), & \frac{x}{t} > \lambda_{i}{(h}_{0},u_{0}),

\end{cases} ,\quad i=1,2.
\]
There is an implicit relation for $s$ above, but
the exact formulas are not important 
for the analysis of solutions.

\section{Left-hand sided waves}

Proving the existence of solutions on the left-hand side 
means showing that it is 
possible to find appropriate $h_{0}>0$ and $u_{0}$ such that 
system (\ref{even}) with the Riemann initial data
\[
(h,u)|_{t=0} = \begin{cases}
(h_{l},u_{l}), & x < 0 \\
(h_{r},u_{r}), & x > 0 
\end{cases}
\]
has a solution consisting of waves with negative speed.

One could see that for \(\ S_{2}\), the wave speed is positive. That
is that there are no such waves on the left-hand side.

An \(R_{2}\) propagates with different speeds, but the maximal one equals
\(\lambda_{2}{(h}_{0},u_{0})\). It has to be negative, so
\begin{equation} \label{r2r}
\lambda_{2}{(h}_{0},u_{0}) = u_{0} + \sqrt{gh_{0}} < 0, 
\text{ and  } u_{0} < - \sqrt{gh_{0}}<0.
\end{equation}

Thus, possible solutions on the left-hand side can be
\(S_{1},\ R_{1},\ R_{2},\ S_{1} + R_{2}, \text{ or } R_{1} + R_{2}\)

For the $S_1$, starting at
\({(h}_{l},u_{l}),\) one has to find the intermediate state
\((h_{\ast},u_{\ast}) = S_{1}(h_{l},u_{l})\),
and then the second wave  at 
\((h_{\ast},u_{\ast}),$ such that  $(h_{0},u_{0}) = R_{2}(h_{\ast},u_{\ast})\), provided 
\(\lambda_{2}(h_{\ast},u_{\ast}) \leq \lambda_{2}{(h}_{0},u_{0}) < 0\).
If \( (h_{0},u_{0}) = (h_{\ast},u_{\ast}) \),  a solution is a single shock wave.

With \(R_{1}\) instead of \(S_{1}\), one has to
find  \({(h}_{\ast},u_{\ast})\) such that
\((h_{\ast},u_{\ast}) = R_{1}(h_{l},u_{l})\), with
\((h_{0},u_{0}) = R_{2}(h_{0},u_{0})\). If \( (h_{0},u_{0}) 
= (h_{\ast},u_{\ast}) \), a solution is an $R_{1}$.

The last possibility is a single $R_{2}$ on the left-hand side. Then 
\( (h_{0},u_{0}) = R_{2}(h_{l},u_{l})\)

For the shock wave, the necessary condition is the negativity of its speed,
\[
u_{l} - \sqrt{\frac{g\left( h_{\ast} + h_{l} \right)h_{\ast}}{2h_{l}}} < 0, 
\text{ that is, }
gh_{\ast}^{2} + gh_{l}h_{\ast} - 2h_{l}u_{l}^{2} > 0.
\]

For the rarefaction wave, the following conditions should hold
\[\lambda_{2}(h_{\ast},u_{\ast}) \leq u_{\ast} + \sqrt{gh_{\ast}} \leq u_{0} + \sqrt{gh_{0}} < 0\]
and it implies relation $u_{0}<-\sqrt{gh_{0}}$ that is already obtained
above. 

\begin{lemma} \label{lemma2}
Suppose that $u_{l}\geq 0$. A solution for $x<0$ may consist of
the following waves and their combinations.
\begin{enumerate}
\item A single $S_{1}$ when
\begin{equation} \label{SR2}
h_{0} > \max \left\{ h_{l}, \; \frac{h_l}{2}\left( \sqrt{1 + 8z} - 1 \right)\right\},
\quad z:=\frac{u_l^2}{gh_l}
\end{equation}
\begin{equation} \label{S}
u_{0} = u_{l} - \sqrt{\frac{g}{2} \left( \frac{1}{h_{0}} + \frac{1}{h_{l}} \right)}
\left( h_{0} - h_{l} \right), \; h_{0} > h_{l}.
\end{equation}
\item A combination $S_{1}+R_{2}$, where $u_{0}$ satisfies
\begin{equation} \label{SR}
u_{l} - \sqrt{\frac{g}{2}\left( \frac{1}{h_{l}} + \frac{1}{h_{0}} \right)}
\left( h_{0} - h_{l} \right) < u_{0} 
< -\sqrt{gh_{0},}\;  h_{0} > h_{l},
\end{equation}
and $h_{0}$ satisfies (\ref{SR2}).
\end{enumerate}
\end{lemma}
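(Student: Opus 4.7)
The plan is to verify directly that each of the two claimed wave configurations yields an admissible left-traveling solution, and that the stated inequalities on $h_0$ and $u_0$ are exactly the combined conditions for $(h_0,u_0)$ to lie on the appropriate composition of wave curves and for every component wave to travel with negative speed. All ingredients -- shock and rarefaction curves, and their speeds -- are recorded in the preceding subsection, so the argument reduces to a sign and monotonicity analysis.

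For the single $S_1$ case, setting $(h_0,u_0)=S_1(h_l,u_l)$ with $h_0>h_l$ gives formula (\ref{S}) directly from the $S_1$ curve. The shock speed $c=u_l-\sqrt{g(h_0+h_l)h_0/(2h_l)}$ is negative if and only if $u_l^2<g(h_0+h_l)h_0/(2h_l)$; since $u_l\geq 0$ squaring is harmless, and this rearranges to the quadratic inequality $gh_0^2+gh_l h_0-2h_l u_l^2>0$. Its positive root is $\frac{h_l}{2}(\sqrt{1+8z}-1)$, so $h_0$ must exceed that value, and together with $h_0>h_l$ one obtains (\ref{SR2}).

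For the $S_1+R_2$ combination, introduce an intermediate state $(h_\ast,u_\ast)=S_1(h_l,u_l)$ with $h_\ast\in(h_l,h_0]$, so that $u_\ast$ is given by (\ref{S}) with $h_\ast$ in place of $h_0$; then the $R_2$ formula gives $u_0=\Phi(h_\ast):=u_\ast-\sqrt{gh_\ast}+\sqrt{gh_0}$. At the endpoints, $\Phi(h_0)$ equals the $S_1$-alone value $u_l-\sqrt{\frac{g}{2}(\frac{1}{h_l}+\frac{1}{h_0})}(h_0-h_l)$, and $\Phi(h_l^+)=u_l+\sqrt{gh_0}-\sqrt{gh_l}$. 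A direct sign check shows $\Phi$ is strictly decreasing on $(h_l,h_0]$, so its image is the interval between these endpoints. Imposing the $R_2$-admissibility $u_0<-\sqrt{gh_0}$, and observing that $u_l\geq 0$ combined with $h_0>h_l$ forces $\Phi(h_l^+)>0>-\sqrt{gh_0}$, the effective upper bound on $u_0$ is exactly $-\sqrt{gh_0}$, yielding (\ref{SR}). Applying the quadratic analysis of case 1 now to the intermediate height $h_\ast$ (whose shock speed must be negative) gives $h_\ast>\frac{h_l}{2}(\sqrt{1+8z}-1)$, and since $h_0\geq h_\ast>h_l$ this passes to $h_0$, recovering (\ref{SR2}).

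I expect the main technical obstacle to be the clean verification of the strict monotonicity of $\Phi$, since differentiating the $S_1$ curve is cumbersome; a convenient route is to rewrite $u_\ast=u_l-\sqrt{g(h_\ast-h_l)^2(h_\ast+h_l)/(2h_l h_\ast)}$ and observe that the radicand is increasing in $h_\ast$, and that $-\sqrt{gh_\ast}$ is decreasing, so $\Phi$ is the sum of two strictly decreasing functions of $h_\ast$. A secondary point requiring care is that in case 2 the shock speed condition constrains $h_\ast$, not directly $h_0$; but for every $u_0$ in the range (\ref{SR}) the monotonicity of $\Phi$ produces a unique $h_\ast\in(h_l,h_0]$ with $\Phi(h_\ast)=u_0$, and the lower endpoint $\Phi(h_0)$ is attained precisely at the $S_1$-only configuration, which satisfies the speed condition by (\ref{SR2}); the speed condition then propagates upward in $h_\ast$ as well.
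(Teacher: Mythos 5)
Your verification of the two listed configurations matches the paper's computations, and your monotonicity argument for $\Phi$ (rewriting $u_\ast$ so that the radicand $\tfrac{g}{2h_lh_\ast}(h_\ast-h_l)^2(h_\ast+h_l)$ is visibly increasing) is a clean alternative to the paper's direct differentiation of $u_0(h_\ast)$. However, there is a genuine gap: the lemma is an exhaustive classification, not merely an existence statement, and it is used later precisely in that form (the proof of Lemma \ref{lemma:dam} invokes that \emph{only} $S_1$ or $S_1+R_2$ can occur for $x<0$). The paper's proof devotes three of its five cases to ruling out the remaining candidates once $S_2$ has been discarded for having positive speed: the combination $R_1+R_2$, a single $R_1$, and a single $R_2$. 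Each exclusion genuinely uses the hypothesis $u_l\geq 0$: for $R_1+R_2$ the two rarefaction relations force $u_0-u_l>\sqrt{g}\left|\sqrt{h_0}-\sqrt{h_l}\right|\geq 0$, and for a single $R_2$ one gets $u_0>u_l\geq 0$, both contradicting the admissibility requirement $u_0<-\sqrt{gh_0}$ from (\ref{r2r}); for a single $R_1$ the condition $\lambda_1(h_0,u_0)<0$ forces $h_0>h_l$, contradicting $h_0<h_l$ on the $R_1$ curve. Your plan as stated only proves that the two configurations are admissible under (\ref{SR2}), (\ref{S}), (\ref{SR}); it never shows they are the only possibilities, so the half of the lemma that the rest of the paper actually relies on is missing.

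A secondary, smaller imprecision: in case 2 you argue that the shock-speed condition on $h_\ast$ ``propagates upward in $h_\ast$,'' but the problematic regime is the opposite one -- for $u_0$ near the upper bound $-\sqrt{gh_0}$ the corresponding $h_\ast$ is close to $h_l$, which may lie below $\tfrac{h_l}{2}(\sqrt{1+8z}-1)$ when $z>1$, so the speed condition can fail there even though (\ref{SR2}) holds for $h_0$. The paper is only slightly more careful here (it notes that the constraint is binding exactly when $z>1$), so this is a shared looseness rather than an error specific to your argument, but your phrasing suggests the constraint is automatically inherited by all $h_\ast$ in the range, which it is not.
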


\begin{proof}
Let us now check all combinations of elementary waves.

\noindent \textbf{1.}
In the case $R_{1} + R_{2}, $ we have to solve the following   system
\begin{equation*}
\begin{split}
& u_{\ast} = u_{l} + \sqrt{gh_{l}} - \sqrt{gh_{\ast}},\; h_{\ast}< h_{l} \\
& u_{0} = u_{\ast} - \sqrt{gh_{\ast}} + \sqrt{gh_{0}},\; h_{0}>h_{\ast}
\end{split}
\end{equation*}
with respect to the variables \(\ (h_{\ast},u_{\ast})\). Substitution of the first in the
second equation gives
\[u_{0} = u_{l} + \sqrt{gh_{l}} - 2\sqrt{gh_{\ast}} + \sqrt{gh_{0}},
\text{ where } h_{\ast}\leq \min \{h_{l},h_{0}\}\]
and that implies \( u_{0} \leq u_{l} + \sqrt{g}
\left( \sqrt{h_{0}} + \sqrt{h_{l}} \right)\). 
The condition on \({h}_{\ast}\) implies
\[u_{0} \geq u_{l} + \sqrt{gh_{0}} - \sqrt{gh_{l}},\text{ and } 
u_{0} \geq u_{l} + \sqrt{gh_{l}} - \sqrt{gh_{0}}.\]
These two bounds, taken together, may be written as
\[u_{0} - u_{l} > \left| \sqrt{g}\left( \sqrt{h_{0}} 
- \sqrt{h_{l}} \right) \right|.\]
These bounds give \(u_{0} > u_{l}\) that contradicts (\ref{r2r}),
since \(u_{l} \geq 0\). Thus, it is impossible to have a combination of 
two rarefaction waves on the left-hand side.

\noindent \textbf{2.}
To get \(S_{1} + R_{2}\ \ \)solution, one has to find
\((h_{\ast},u_{\ast})\) such that
\[ 
u_{\ast} = u_{l} - \sqrt{\frac{g}{2h_{l}h_{\ast}}\left( h_{\ast} - h_{l} \right)
\left( h_{\ast}^{2} - h_{l}^{2} \right)}, \text{ with }
u_{l} - \sqrt{\frac{g\left( h_{\ast} + h_{l} \right)h_{\ast}}{h_{l}}}<0, \; 
h_{\ast} > h_{l},
\]
and 
\[
u_{0} = u_{\ast} - \sqrt{gh_{\ast}} + \sqrt{gh_{0}}, \text{ with } 
u_{0} + \sqrt{gh_{0}} < 0, \; h_{0} > h_{\ast}.
\]
Again, substituting the first equation in the second one,
\[
\begin{split}
u_{0} = & u_{l} - \sqrt{\frac{g\left( h_{\ast} + h_{l} \right)}{{2h}_{l}h_{\ast}}}\left( h_{\ast} 
- h_{l} \right) + \sqrt{gh_{0}} - \sqrt{gh_{\ast}}\\
= & u_{l} - \sqrt{\frac{g}{2}\left( \frac{1}{h_{l}} + \frac{1}{h_{\ast}} \right)}\left( h_{\ast} 
- h_{l} \right) + \sqrt{gh_{0}} - \sqrt{gh_{\ast}},
\end{split}
\]
where \(h_{l} < h_{\ast} <  h_{0} \). That implies \(h_{0} > h_{l}\). Then, 
assuming that $u_{0}=u_{0}(h_{\ast})$, 
\[
\begin{split}
\frac{{du}_{0}}{{dh}_{\ast}} = & \frac{\frac{g}{2} \frac{1}{h_{\ast}^{2}}}{2\sqrt{\frac{g}{2}
\left( \frac{1}{h_{l}} + \frac{1}{h_{\ast}} \right)}}\left( h_{\ast} - h_{l} \right) 
- \sqrt{\frac{g}{2}\left( \frac{1}{h_{l}} + \frac{1}{h_{\ast}} \right)} 
- \sqrt{\frac{g}{2}.\frac{1}{h_{\ast}}} \\ 
= & \frac{\frac{g}{2}.\frac{1}{h_{\ast}^{2}}}{2\sqrt{\frac{g}{2}
\left( \frac{1}{h_{l}} + \frac{1}{h_{\ast}} \right)}}\left( \frac{1}{h_{\ast}} 
- \frac{h_{l}}{h_{\ast}^{2}} - \frac{1}{h_{l}} - \frac{1}{h_{\ast}} \right) 
- \sqrt{\frac{1}{h_{\ast}}\left( \frac{1}{h_{l}} + \frac{1}{h_{\ast}} \right)} < 0.
\end{split}
\]
So, \(u_{0}\left( h_{\ast} \right)\) is a decreasing function with respect to
\(h_{\ast}\). Then
\(u_{0}(h_{0}) < \ u_{0}(h_{\ast}) 
< u_{0}(h_{l}).\)
The value $u_{0}$ is already bounded from above by $-\sqrt{gh_{0}}<0$.
Writing these two bounds together,
\begin{equation*} 
u_{l} - \sqrt{\frac{g}{2}\left( \frac{1}{h_{l}} + \frac{1}{h_{0}} \right)}
\left( h_{0} - h_{l} \right) < u_{0} 
< -\sqrt{gh_{0},}\;  h_{0} > h_{l}.
\end{equation*}
The fact that shock speed is negative gives another bound on 
\({u}_{0}\)
\[
u_{l} - \sqrt{\frac{g\left( h_{\ast} + h_{l} \right)h_{\ast}}{2h_{l}}}<0 
\text{ that implies }  gh_{\ast}^{2} + gh_{l}h_{\ast} - 2h_{l}u_{l}^{2} > 0.
\]
The roots of that quadratic function are
\[
(h_{\ast})_{1,2} = \frac{- gh_{l} \pm \sqrt{g^{2}h_{l}^{2} + 8gh_{l}u_{l}^{2}}}{2g}.
\]
Thus, 
\[
h_{\ast} > \frac{h_l}{2}\left( \sqrt{1+8z} - 1\right) =: \tilde{h}, \quad \text{where } z:= \frac{u_l^2}{gh_l}.
\]

The value of $h_{\ast}$ is between $h_{l}$ and $h_{0}$, so condition
for its existence is  
\begin{equation*} 
h_{0} > \frac{h_l}{2}\left( \sqrt{1+8z} - 1 \right), \text{ and } h_{0}>h_{l}.
\end{equation*}
The relation $\tilde{h}>h_{l}$ holds for
$z > 1$. If not, the previous bound 
$h_{0}>h_{l}$ is enough for existence of $S_{1}+R_{2}$ solution.

\noindent \textbf{3.}
An $S_{1}$ is much simpler to analyse than 
the previous wave combination. 
The condition for $h_{0}$ is (\ref{SR2}), while $u_{0}$ has to satisfy  
\begin{equation*} 
u_{0} = u_{l} - \sqrt{\frac{g}{2} \left( \frac{1}{h_{0}}+\frac{1}{h_{l}} \right)}
\left( h_{0} - h_{l} \right), \quad h_{0} > h_{l}.
\end{equation*}

\noindent \textbf{4.}
Suppose that there is a single $R_{1}$ solution on the left-hand 
side. That is,  
\(u_{0} = u_{l} + \sqrt{gh_{l}} - \sqrt{gh_{0}}\), \(h_{0}<h_{l}\). 
The condition $\lambda_{1}(h_{0},u_{0})<0$ implies  
\(u_{0} < \sqrt{gh_{0}}\).
From that relations, the condition for \(h_{0}\) given by 
the inequality 
\( u_{l} + \sqrt{gh_{l}} - \sqrt{gh_{0}} < \sqrt{gh_{0}}\). 
It implies \(\sqrt{h_{0}} > \frac{1}{2\sqrt{g}}(u_{l} + \sqrt{gh_{l}})\),
and further on, 
\(h_{0} > \frac{1}{4}\left( \frac{u_{l}}{\sqrt{g}} + \sqrt{h_{l}} \right)^{2} > h_{l}\).
But that contradicts the fact that \(h_{0}<h_{l}\) for $R_1$, so there is no
a single $R_{1}$ solution.

\noindent \textbf{5.}
Finally, suppose that there exists a single $R_{2}$ solution. Then,
\(u_{0} = u_{l} - \sqrt{gh_{l}} + \sqrt{gh_{0}}\), \(h_{0}>h_{l}\),
and \(u_{0} < -\sqrt{gh_{0}}\). The last condition is here because
$\lambda_{2}(h_{0},u_{0})<0$. One could see that $u_{0}>u_{l} \geq 0$ 
since $h_{0}>h_{l}$. But that would mean $u_{0}+\sqrt{gh_{0}}>0$ which
is contradiction with $u_{0}<-\sqrt{gh_{0}}$.

To conclude, the only possible solutions on the left-hand
side of the bed jump (dam) are
\(S_{1}\) with conditions (\ref{S}) and (\ref{SR2}) or 
\(S_{1}+R_{2}\), with satisfied conditions (\ref{SR}) and (\ref{SR2}).
\end{proof}

The next task would be to look at all possibilities for 
the right-hand side waves with positive velocities connecting 
$(h_{1},u_{1})$ with  $(h_{r},u_{r})$ such that conditions
(\ref{chi}) and (\ref{ent}) around the origin
are satisfied.
The analysis may be done similarly to the above but with much more technical
details because $u_{l}\geq 0$ have excluded some cases. 
Instead, we will look at the specific case of a bed jump
being a dam. That is $(h_{r},u_{r})=(0,0)$ 
-- there is no water on the right-hand side of the dam.

\section{Dam breaking problem}

Suppose that there is no fluid on the right-hand side of
the dam initially, i.e.\ $(h_{r},u_{r})=(0,0)$.
The question is whether the fluid will go over it or not, 
i.e.\ is there a solution to (\ref{sys}) that has a non-trivial 
right-going wave? If so, we will see that the speed immediately 
after the jump should be positive, 
$u_{1}>0$. That would imply $u_{0}>0$, which is 
possible only if we have a single shock on the
left-hand side by Lemma \ref{lemma2} (Case 3). 

\begin{lemma}\label{lemma:dam}
Fix $(h_l,u_l),$ $h_l,u_l>0$ and $b_1>b_0>0$. 
If there exists a weak solution to
\[
\begin{split}
& \partial_{t}h + \partial_{x}(uh) = 0 \\
& \partial_{t}(uh) + \partial_{x}\left( u^{2}h + g\frac{h^{2}}{2} \right) = - ghb_{x}, \end{split}\]
with the Riemann initial data
\[
(h,u)|_{t=0} = \begin{cases}
(h_{l},u_{l}), & x < 0 \\
(0,0), & x > 0.\end{cases}
\]
It consists of an $S_1$ wave with a negative speed on the left-hand side,
connected by a centered shadow wave (\ref{sdw}) to a combination of $R_1$ and $R_2$,
with a vacuum state in between. 
\end{lemma}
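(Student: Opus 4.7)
The plan is to combine the identity $h_{0}u_{0}=h_{1}u_{1}$ from the shadow wave analysis of Section 2 with Lemmas \ref{lemma1} and \ref{lemma2} to restrict the left-hand structure, and then classify the right-going waves by elimination. For water to pass over the dam one needs $h_{1}>0$ and $u_{1}>0$; otherwise either the right side is trivially vacuum or the identity forces $u_{0}=0$. The sign constraint then gives $u_{0}>0$, so by Lemma \ref{lemma2} the only admissible left-hand configuration is Case 3, a single $S_{1}$: Case 2 ($S_{1}+R_{2}$) requires $u_{0}<-\sqrt{gh_{0}}<0$. Condition (\ref{SR2}) supplies the negativity of the shock speed. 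Applying Lemma \ref{lemma1} at $x=0$ then fixes $(h_{1},u_{1})$ via (\ref{opth1}); a direct calculation using $u_{1}=h_{0}u_{0}/h_{1}$ shows that both branches of (\ref{opth1}) satisfy $u_{1}\geq\sqrt{gh_{1}}$, with equality exactly at $h_{1}=\sqrt[3]{h_{0}^{2}u_{0}^{2}/g}$. Hence the downstream flow is sonic or supersonic.

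For the right-going structure I would rule out every alternative. A 1-shock $S_{1}$ or 2-rarefaction $R_{2}$ from $(h_{1},u_{1})$ increases $h$ and so cannot reach $h=0$; a 2-shock $S_{2}$ fails the Rankine--Hugoniot jump against vacuum. The remaining option is a 1-rarefaction $R_{1}$ from $(h_{1},u_{1})$, which by the Riemann invariant $u+2\sqrt{gh}=u_{1}+2\sqrt{gh_{1}}$ reaches $h=0$ at limiting velocity and wave speed $u_{1}+2\sqrt{gh_{1}}>0$. For $x/t$ beyond this value the solution is pure vacuum, and the match to the initial right state $(0,0)$ is realised by a zero-strength 2-rarefaction $R_{2}$ at the edge of the vacuum fan; this gives the claimed $R_{1}$--vacuum--$R_{2}$ structure.

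Global consistency requires $\lambda_{1}(h_{1},u_{1})=u_{1}-\sqrt{gh_{1}}\geq 0$ so that the $R_{1}$ fan lies in $x\geq 0$ and does not overlap the stationary shadow wave; this is precisely the sonic/supersonic bound of the first paragraph. The main obstacle I anticipate is verifying $u_{1}\geq\sqrt{gh_{1}}$ in the constrained branch $h_{1}=h_{0}-[b]$ of (\ref{opth1}): one has to show $h_{0}-[b]\leq\sqrt[3]{h_{0}^{2}u_{0}^{2}/g}$ automatically in that case, which holds because that branch is selected precisely when the unconstrained optimum $\sqrt[3]{h_{0}^{2}u_{0}^{2}/g}$ exceeds $h_{0}-[b]$. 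The remaining subcases -- excluding $S_{1}$, $S_{2}$, and $R_{2}$ alone together with shock-containing combinations on the right -- reduce to monotonicity of the shock and rarefaction curves from Section 3 and should be routine.
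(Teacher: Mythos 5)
Your proposal is correct and follows essentially the same route as the paper's proof: the vacuum state can only be reached through an $R_1$ fan, which forces $\lambda_1(h_1,u_1)=u_1-\sqrt{gh_1}\geq 0$, hence $u_1>0$, hence $u_0>0$ via $h_0u_0=h_1u_1$, and Lemma \ref{lemma2} then eliminates every left-hand configuration except a single $S_1$. Your explicit verification that both branches of (\ref{opth1}) give $u_1\geq\sqrt{gh_1}$ (with equality exactly on the cube-root branch, since $gh_1^3\leq h_0^2u_0^2=h_1^2u_1^2$), and your Rankine--Hugoniot elimination of a shock adjacent to vacuum, are worthwhile details that the paper leaves implicit.
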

\noindent
\begin{proof} 
The only way to reach $(0,0)$ from some state $(h_{1},u_{1})$ with both components 
being positive is through an $R_{1}$ followed by the vacuum. That is, find $u_{m}$ such
that the $R_{1}$ connects $(h_{1},u_{1})$ with $(0,u_{m})$, 
$u_{m}=u_{1}+\sqrt{gh_{1}}$,
while $(0,u_{m})$ and $(0,0)$ will be connected by the vacuum. 
That situation is similar to the well-known vacuum solutions 
for the isentropic gas model. The speed $u_{1}$ has to be big enough
because $\lambda_{1}(h_{1},u_{1})=u_{1}-\sqrt{gh_{1}}$ has to be greater
or equal to zero since the wave is going to the right. That implies $u_{0}>0$ because
of (\ref{u1}). We have seen that only $S_1$ or $S_1+R_2$ may be solutions
to the Riemann problem on the left-hand side when $u_l>0$.
The final speed of an $R_{2}$ going from the left-hand side
is negative, $u_{0}+\sqrt{gh_{0}}<0$ which would imply 
$u_{0}<0$. That would mean $u_1<0$, which is in contradiction with the vacuum assumption.
Thus, an $S_{1}$ is only possible solution on the left-hand
side. 
\end{proof}

We reduce the existence problem to a discussion about the possible values of 
the variable $h_{0}$, since all others are determined by it: 
$u_{0}=u_{0}(h_{0})$ by (\ref{S}), $u_{1}=u_{1}(h_{1})$ by (\ref{u1}), 
and 
$h_{1}=h_{1}(h_{0})$ by (\ref{opth1})
(by the maximal dissipation condition and the entropy condition).
The speed of $S_{1}$ defined by 
\begin{equation}\label{u01}
u_0=u_0(h_0)=u_l-\sqrt{\frac{g}{2}\Big(\frac{1}{h_0}+\frac{1}{h_l}\Big)}(h_0-h_l),
\end{equation}
is 
\[
c_{1} = u_{l} - \sqrt{\frac{g}{2}
\left( \frac{1}{h_{0}} + \frac{1}{h_{l}} \right)} h_{0}. 
\]

Using notation and the results of the 
stationary shadow wave admissibility 
in the sense of Definition \ref{d_optim},  we obtain the total energy production of the solution,
\[
\begin{split}
\frac{d}{dt}H_{[-L,L]}(t) = & -c_{1} \left(\eta(h_{0},u_{0}) 
- \eta(h_{l},u_{l})\right)
+ \frac{\varepsilon}{2}\left(\eta(\chi,0) 
-\eta(h_{0},u_{0})\right) \\
& - \frac{\varepsilon}{2}\left( \eta(h_{1},u_{1}) 
-\eta(\chi,0) \right) 
+Q(h_{1},u_{1})-Q(0,u_{m}) \\
\approx &  Q(h_{1},u_{1}) 
-c_{1} \left(\eta(h_{0},u_{0}) - \eta(h_{l},u_{l})\right). 
\end{split}
\]
After letting $\varepsilon \to 0$, the non-constant part of 
$\frac{d}{dt}H_{[-L,L]}(t)$ equals
\begin{equation}\label{E}
\begin{split}
E(h_0):= & -c_{1}
\left( h_{0}\frac{u_{0}^{2}}{2}+g\frac{h_{0}^{2}}{2}+b_{0}h_{0}
-\left( h_{l}\frac{u_{l}^{2}}{2}+g\frac{h_{l}^{2}}{2}+b_{0}h_{l}\right)
\right)\\
& + h_{1}u_{1}\left( \frac{u_{1}^{2}}{2}+g(h_{1}+b_{1})\right),	
\end{split}
\end{equation}
where $c_{1}=c_{1}(h_{0})$, $u_{1}=u_{1}(h_{0})$, and $h_{1}=h_{1}(h_{0})$.
The goal is to find $h_{0}$ that minimizes the functional $E$ and satisfies
the assumptions listed below.
The conditions are:
\begin{enumerate}
\item $h_{0}\geq h_{l}$.
\item Non-positivity of the shock speed,
\begin{equation} \label{C1}
u_{l} - \sqrt{\frac{g}{2} \left( \frac{1}{h_{0}} + \frac{1}{h_{l}} \right)}
h_{0}\leq 0.
\end{equation}
\item Non-negativity of $u_{0}$,  
\begin{equation} \label{C2}
u_{l} - \sqrt{\frac{g}{2} \left( \frac{1}{h_{0}} + \frac{1}{h_{l}} \right)}
\left( h_{0} - h_{l} \right) \geq 0, \quad h_{0} > h_{l}.
\end{equation}
Note that $h_{0}=h_{l}$ gives a constant state on the left-hand side,
and $u_{0}=u_{l}>0$ by the assumption.
\item  The entropy condition
\[h_1\leq h_0-(b_1-b_0).\]
\end{enumerate}
After some algebraic 
manipulations, condition (\ref{C1}) transforms to 
\[
h_{0}>\frac{1}{2}h_{l}\left( \sqrt{1+8\frac{u_{l}^{2}}{gh_{l}}}-1 \right).
\]
Denote  $z=\frac{u_{l}^{2}}{gh_{l}}$. One can easily see that 
\[
\frac{1}{2}h_{l}\left( \sqrt{1+8z}-1 \right)>h_l\quad \text{ for }
z>1.
\]
Thus, (\ref{C1}) implies
\[
h_{0}\geq \tilde{h}=
\begin{cases}
\frac{1}{2}h_{l}\left( \sqrt{1+8z}-1 \right), & 
z>1 \\
h_{l}, & z\leq 1
\end{cases}.
\]
Note that $c_1(\tilde{h})=0$ when $z>1,$ while $c_1(\tilde{h})=c_1(h_l)\leq 0$ if $z\leq 1.$

Relation (\ref{C2}) is true if and only if 
\begin{equation} \label{f}
f(h_{0}):=h_{0}^{3}-h_{l}h_{0}^{2}
-(h_{l}^{2}+\frac{2}{g}u_{l}^{2}h_{l})h_{0}+h_{l}^{3}<0.
\end{equation}
We have $f(0)=h_{l}^{3}>0$ and $f(h_{l})=-\frac{2}{g}u_{l}^{2}h_{l}^{2}<0$,
so $f$ has three real roots, one less than zero, one between $0$ and
$h_{l}$, and one greater than $h_{l}$.
The extreme points of the cubic function $f$ are 
\[
(h_{0})_{e_{1},e_{2}}=\frac{1\pm 2\sqrt{1+\frac{3}{2}z}}
{3}h_{l}.
\]
The first extreme point is negative, while the second one is greater than $h_{l}$.
Denote by $\underline{h}$ the largest real root of the equation $f(h_0)=0$.
After that point, $f$ is positive.

For $z\leq 1, $ $\tilde{h}=h_l<\underline{h}$, and
\[\begin{split}
f(\tilde{h})&=h_l^3\Big(\frac{1}{8}(\sqrt{1+8z}-1)^3-\frac{1}{4} (\sqrt{1+8z}-1)^2
-\frac{1+2z}{2}(\sqrt{1+8z}-1)+1\Big)\\
&=h_l^3\Big(1-\frac{1}{2}\sqrt{1+8z}(\sqrt{1+8z}-1)\Big)<-2h_l^3<0, \,\text{ for } z>1.
\end{split}\] 
So $f(\tilde{h})<0$ for each $z>0$, which means that $\tilde{h}<\underline{h}.$
Hence, the conditions 1-3 reduce to $h_0\in[\tilde{h},\underline{h}].$
If $h_0=\underline{h}$, then $u_0=u_1=h_1=0$. 
\medskip

Before we prove the existence theorem, we shall prove the following auxiliary lemma.
\begin{lemma}\label{lemma_H}
Fix $h_l,u_l>0$, $b_0>0$ and define
\[\tilde{H}(h):=-c_1(h)\big(\eta(h,u_0(h))-\eta(h_l,u_l)\big), \quad h\geq h_l,\]
where $u_0(h)=u_{l} - \sqrt{\frac{g}{2} \left( \frac{1}{h} + \frac{1}{h_{l}} \right)}
\left( h - h_{l} \right),$
$c_1(h)= u_{l} - \sqrt{\frac{g}{2} \left( \frac{1}{h} + \frac{1}{h_{l}} \right)}
h$ and $\eta(h,u)=h\frac{u^2}{2}+g\frac{h_0^2}{2}+b_0h$.
Then $\tilde{H}$ is non-negative and increasing function for $h\in(\tilde{h},\underline{h}]$.
\end{lemma}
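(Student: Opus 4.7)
The plan is to exploit the Rankine--Hugoniot jump relations across the $S_1$ shock to rewrite $\eta(h,u_0(h))-\eta(h_l,u_l)$ in a factored form that makes both the sign and the monotonicity transparent, and then write $\tilde{H}$ as a product of three manifestly non-negative factors.

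First I would combine mass conservation $c_1(h-h_l)=hu_0-h_lu_l$ with the momentum jump $c_1(hu_0-h_lu_l)=hu_0^2-h_lu_l^2+\tfrac{g}{2}(h^2-h_l^2)$ to eliminate $hu_0^2-h_lu_l^2$ in favor of $c_1^2(h-h_l)-\tfrac{g}{2}(h^2-h_l^2)$. Substituting into the definition of $\eta$, and using that $b\equiv b_0$ on the left-hand side so that $b_0(h-h_l)$ is merely an additive term, the calculation should yield
\[
\eta(h,u_0(h))-\eta(h_l,u_l)=(h-h_l)\left[\frac{c_1(h)^2}{2}+\frac{g(h+h_l)}{4}+b_0\right].
\]
This identity is the only place where genuine algebra is needed, and I expect it to be the main obstacle; once it is in hand, the remainder of the proof is essentially free. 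Non-negativity then follows at once: on $[\tilde{h},\underline{h}]$ we have $-c_1(h)\geq 0$ by condition (\ref{C1}), $(h-h_l)\geq 0$ because $\tilde{h}\geq h_l$, and the bracket is manifestly positive, so $\tilde{H}(h)\geq 0$.

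For strict monotonicity on $(\tilde{h},\underline{h}]$, I would view $\tilde{H}$ as the product of $-c_1(h)$, $(h-h_l)$, and the bracket, and check that each factor is strictly increasing in $h$: the derivative of $-c_1(h)=\sqrt{g(h+h_l)h/(2h_l)}-u_l$ is clearly positive, $(h-h_l)$ is obviously strictly increasing, and the bracket increases because $c_1^2=(-c_1)^2$ is the square of a non-negative increasing function and $g(h+h_l)/4$ is increasing. It then remains to verify that all three factors are strictly positive on the open interval, handling the two regimes separately: when $z>1$ we have $\tilde{h}>h_l$ and $-c_1(\tilde{h})=0$, so $(h-h_l)>0$ throughout and $-c_1(h)>-c_1(\tilde{h})=0$ for $h>\tilde{h}$; when $z\leq 1$ we have $\tilde{h}=h_l$ and $-c_1(\tilde{h})=\sqrt{gh_l}-u_l\geq 0$, so again both factors are strictly positive just above $\tilde{h}$. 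A product of strictly positive strictly increasing functions is strictly increasing, giving the claim.
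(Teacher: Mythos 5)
Your proposal is correct, and its algebraic core is exactly the paper's: the factorization
\[
\eta(h,u_0(h))-\eta(h_l,u_l)=(h-h_l)\Bigl(b_0+\tfrac{1}{2}c_1^2(h)+\tfrac{g}{4}(h+h_l)\Bigr)
\]
is precisely the identity for $d(h)$ that the paper's proof states (the paper writes the last term as $\frac{hh_l}{2}\cdot\frac{g}{2}(\frac1h+\frac1{h_l})$, which is the same thing), and your derivation of it from the Rankine--Hugoniot relations along the $S_1$ curve is valid. Where you diverge is the monotonicity step: the paper differentiates, writing $\tilde{H}'=-c_1'd-c_1d'$, checks $c_1'<0$ and $d>0$ to handle the first term, and then proves $d'>0$ by a separate, somewhat technical estimate on $h(u_0u_0'+g)$. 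You instead observe that $\tilde{H}$ itself is a product of three factors, $-c_1(h)$, $(h-h_l)$, and the bracket, each of which is non-negative and increasing on $(\tilde{h},\underline{h}]$ (using $-c_1\geq 0$ there, so that $c_1^2=(-c_1)^2$ is increasing), and invoke the elementary fact that a product of positive increasing functions is increasing. This bypasses the paper's computation of $d'$ entirely and is, if anything, cleaner; your case analysis at the left endpoint ($z>1$ versus $z\leq 1$) correctly establishes strict positivity of all factors on the open interval. Both arguments yield the same conclusion. (Incidentally, both you and the paper read the statement's $\eta(h,u)=h\frac{u^2}{2}+g\frac{h_0^2}{2}+b_0h$ as the evident typo for $g\frac{h^2}{2}$, which is the intended energy.)
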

\begin{proof}
Denote
\[d(h):=\eta(h,u_0(h))-\eta(h_l,u_l).\]
Then
\[\begin{split}
\tilde{H}'(h)&=-c_1'(h)d(h)-c_1(h)d'(h),\\
c_1'(h)&=\sqrt{\frac{g}{2}\Big(\frac{1}{h}+\frac{1}{h_l}\Big)}
\Big(-1+\frac{1}{2}\frac{h_l}{h+h_l}\Big)<0
\end{split}\]
and
\[
d'(h)=\frac{u_0^2(h)}{2}+h(u_0(h)u_0'(h)+g)+b_0.
\]
We have 
\[d(h)=(h-h_l)\Big(b_0+\frac{1}{2}c_1^2(h)+\frac{hh_l}{2}\cdot\frac{g}{2}
\Big(\frac{1}{h}+\frac{1}{h_l}\Big)\Big)>0, \quad h>h_l,\]
which implies $\tilde{H}(h)> 0$ for $h\in(\tilde{h},\underline{h}]$, 
and $\tilde{H}(\tilde{h})=0$. Using  
\[-\frac{1}{2}h_l-\frac{h^2}{h+h_l}\geq -h, \quad h\geq h_l,\]
one easily proves 
\[h(u_0(h)u_0'(h)+g)\geq -hc_1(h)\sqrt{\frac{g}{2}
\Big(\frac{1}{h}+\frac{1}{h_l}\Big)}+\frac{g}{2}\Big(\frac{1}{h}+\frac{1}{h_l}\Big)
\cdot \frac{1}{2}h_l(h-h_l)> 0, h>h_l.\]
Thus, $d'(h)>0$ for $h>h_l$. This is sufficient to prove that 
$\tilde{H}'(h)>0$ for $h\in(\tilde{h},\underline{h}].$
\end{proof}

\begin{theorem}
Let assumptions of Lemma \ref{lemma:dam} hold 
and suppose $\underline{h}\geq [b]$.
Then there exist solution from Lemma \ref{lemma:dam}
satisfying the maximum dissipation principle and the entropy condition. 
Let
$\hat{h}$ be the solution of the equation $r(h_0)=0,$ where
\[r(h_0):=\sqrt[3]{\frac{h_0^2u_0^2}{g}}-h_0+[b].\]
Let
\[M_1=\min_{h_0\in[\tilde{h},\underline{h}]}\Big\{E(h_0)\Big|
\,h_1=\sqrt[3]{\frac{h_0^2u_0^2}{g}},\,r(h_0)\leq 0\Big\}=E(h_m).\]
Denote 
\[\mathcal{M}_2:=\Big\{h_0\in[\tilde{h},\hat{h}]\Big|\,
\frac{[b]}{h_0}\leq\frac{3-\sqrt{1+8y}}{2},\,y=\frac{u_0^2}{gh_0}\leq 1\Big\}\]
and if $\mathcal{M}_2$ is nonempty, 
define
\[M_2=\min_{h_0\in \mathcal{M}_2}\Big\{E(h_0)\Big|\,h_1=h_0-[b]\Big\}=E(h_n).\]
The connection $\big((h_0,u_0),(h_1,u_1)\big)$ that satisfies the entropy condition
and minimizes the total entropy production is given by
\[(h_0,h_1)=\begin{cases}
\big(h_n,h_n-[b]\big), &\text{ if } \mathcal{M}_2 \text{ is nonempty and }  M_2<M_1\\
\big(h_m,\sqrt[3]{\frac{h_m^2u_0^2(h_m)}{g}}\big), & \text{ otherwise}
\end{cases}\] 
while $u_0$ is given by (\ref{u01}) and $u_1=\frac{h_0u_0}{h_1}$. 
\end{theorem}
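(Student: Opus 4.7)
The plan is to reduce the existence and selection problem to a one-variable optimization of $E(h_0)$ over a compact interval, split it according to which branch of Lemma \ref{lemma1} applies, and use Lemma \ref{lemma_H} to control the shock contribution.

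First I would invoke Lemma \ref{lemma:dam} to fix the structure of the solution: a single $S_1$ with negative speed on the left, a stationary shadow wave at $x=0$, and an $R_1$ followed by vacuum on the right. Once the structure is fixed, $u_0$ is determined by $h_0$ via (\ref{u01}), $h_1$ by $h_0$ via Lemma \ref{lemma1}, and $u_1$ by (\ref{u1}). The preamble to the theorem has already shown that conditions 1--3 restrict $h_0$ to the compact interval $[\tilde h,\underline h]$, and the hypothesis $\underline h\geq[b]$ guarantees that the entropy bound $h_1\leq h_0-[b]$ is admissible at the right endpoint, where $u_0=0$ forces $h_1=0$.

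Next I would analyse the auxiliary function $r(h_0)=\sqrt[3]{h_0^2u_0(h_0)^2/g}-h_0+[b]$, which measures the signed gap between the unconstrained minimizer $\bar h_1=\sqrt[3]{h_0^2u_0^2/g}$ of the flux $H_0$ from Lemma \ref{lemma1} and the entropy bound $h_0-[b]$. Its sign determines which branch of Lemma \ref{lemma1} is active: $r(h_0)\leq 0$ corresponds to $h_1=\bar h_1$ (entropy is slack), while $r(h_0)>0$ forces $h_1=h_0-[b]$ and requires the additional admissibility condition (\ref{exist}), which is precisely what cuts out $\mathcal M_2$ inside $[\tilde h,\hat h]$. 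Since $u_0(\underline h)=0$ gives $r(\underline h)=[b]-\underline h\leq 0$, and $r$ is continuous, $\hat h$ (if non-trivial) partitions $[\tilde h,\underline h]$ into the two regimes $[\tilde h,\hat h]$ (binding entropy) and $[\hat h,\underline h]$ (slack entropy).

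On each regime I would substitute the appropriate formula for $h_1$ into $E(h_0)$ from (\ref{E}) and establish existence of a minimizer by compactness. Writing $E=\tilde H+\Phi$ where $\tilde H$ is the non-negative, strictly increasing shock contribution from Lemma \ref{lemma_H} and $\Phi(h_0):=h_1u_1(u_1^2/2+g(h_1+b_1))$ is the right-going flux, the continuity of $\Phi$ on each closed non-empty subset (the slack branch contains $\underline h$ by $\underline h\geq[b]$, the binding branch equals $\mathcal M_2$) yields the minimizers $h_m$ and $h_n$ realising $M_1$ and $M_2$. Definition \ref{d_optim} then selects the global minimum $\min\{M_1,M_2\}$, and reading off the corresponding $h_1$ from the branch that achieves it gives exactly the dichotomy announced in the theorem.

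The main obstacle I expect is the precise sign and monotonicity analysis of $r(h_0)$: one must check that the partition of $[\tilde h,\underline h]$ by $\hat h$ is clean and that the constraint $[b]/h_0\leq(3-\sqrt{1+8y})/2$, $y\leq 1$ is genuinely compatible with $r(h_0)>0$ throughout $\mathcal M_2$, so that the binding-entropy branch of Lemma \ref{lemma1} is indeed available there. A secondary concern is the degenerate endpoint $h_0=\underline h$, where $h_1=u_1=0$ makes $\Phi$ vanish and the comparison between $M_1$ and $M_2$ may need the strict monotonicity of $\tilde H$ from Lemma \ref{lemma_H} to rule out spurious tie-breaking.
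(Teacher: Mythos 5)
Your overall strategy coincides with the paper's: restrict $h_0$ to $[\tilde h,\underline h]$, use the sign of $r$ to decide which branch of Lemma \ref{lemma1} supplies $h_1(h_0)$, minimize $E$ on each branch, and take the smaller of $M_1,M_2$. The existence of $h_m$ via $r(\underline h)=[b]-\underline h\le 0$ and the use of Lemma \ref{lemma_H} to handle the degenerate endpoint $h_0=\underline h$ (where $u_1=0$ kills the flux term, so $\partial E(\underline h)/\partial h_0=\partial\tilde H(\underline h)/\partial h_0>0$ pushes $h_m$ into the interior when $[b]<\underline h$) are also as in the paper.

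The genuine gap is the step you yourself flag as ``the main obstacle'': the monotonicity of $r$. You treat it as a technical check to be done later, but it is the load-bearing part of the argument, and continuity of $r$ alone does not suffice. Without $r'<0$ the zero set of $r$ need not be a single point, so $\hat h$ (defined in the statement as \emph{the} solution of $r(h_0)=0$) is not well defined, the slack-entropy region $\{r\le 0\}$ need not be the interval $[\hat h,\underline h]$ over which $M_1$ is taken, the inclusion $\mathcal M_2\subset[\tilde h,\hat h]$ is unjustified, and the comparison $h_n\le h_m$ fails. The paper closes this by an explicit computation: $u_0'(h_0)<0$, then
\[
(h_0u_0)'=u_l+\sqrt{\tfrac{g}{2}\bigl(\tfrac{1}{h_0}+\tfrac{1}{h_l}\bigr)}
\Bigl(-h_0+\tfrac{1}{2}h_l-\tfrac{h_0}{h_l}\cdot\tfrac{h_0h_l}{h_0+h_l}\Bigr)<c_1\le 0,
\]
whence $r'(h_0)=\tfrac{2}{3}g^{-1/3}(h_0u_0)^{-1/3}(h_0u_0)'-1<0$. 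The same monotonicity of $h_0u_0$ is also what the paper uses to conclude that $h_m$ is uniquely determined on the $\bar h_1$ branch, and that the constraint $1-\sqrt[3]{y}\le [b]/h_0$ characterizing the binding branch fails for $h_0>\hat h$ (since $y=u_0^2/(gh_0)$ is decreasing), which is exactly the compatibility of $\mathcal M_2$ with $r>0$ that you list as a secondary concern. Supplying this computation would complete your plan; without it the partition on which the whole optimization rests is not established.
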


\begin{proof}
The above analysis shows that  $h_0\in[\tilde{h},\underline{h}].$ An optimal
$h_0\in[\tilde{h},\underline{h}]$ has to be chosen such that the entropy condition (\ref{ent}) holds, 
while total energy production is minimal.  
Lemma \ref{lemma1} and (\ref{opth1}) suggest two possibilities for $h_1(h_0)$. 
Let us check the first one, $h_1(h_0)=\bar{h}_1(h_0)=\sqrt[3]{\frac{h_0^2u_0^2}{g}}$.

Entropy condition for  $h_1=\sqrt[3]{\frac{h_0^2u_0^2}{g}}$ from (\ref{opth1}) 
is satisfied if $r(h_0)\leq 0$ for some $h_0\in[\tilde{h},\underline{h}]$.
The function $r(h_0)$ is decreasing in the interval $[\tilde{h},\underline{h}]$ which follows from
\[\begin{split}
u_0'(h_0)&=-\frac{g}{2}\frac{1}{h_0}\frac{\frac{1}{2}h_l
\big(\frac{1}{h_0}+\frac{1}{h_l}\big)+\frac{h_0}{h_l}}{\sqrt{\frac{g}{2}
\big(\frac{1}{h_0}+\frac{1}{h_l}\big)}}<0,\\
(h_0u_0)'&=u_l+\sqrt{\frac{g}{2}\Big(\frac{1}{h_0}+\frac{1}{h_l}\Big)}
\Big(-h_0+\underbrace{\frac{1}{2}h_l-\frac{h_0}{h_l}
\frac{h_0h_l}{h_0+h_l}}_{<\frac{1}{2}(h_l-h_0)<0}\Big)<c_1<0
\end{split}\]
and 
\[r'(h_0)=\frac{2}{3}\frac{1}{\sqrt[3]{g}}(h_0u_0)^{-\frac{1}{3}}(h_0u_0)'-1<0.\]

Due to assumption $[b]\leq \underline{h}$, we have $r(\underline{h})=[b]-\underline{h}\leq 0$,
so the entropy condition is satisfied at least for some $h_0\in[\tilde{h},\underline{h}]$
and $h_m$ always exists. If $[b]=\underline{h}$, we have $h_m=\underline{h}$
and $u_0(h_m)=u_1(h_m)=0.$ Otherwise, $h_m<\underline{h}$, since $u_1(\underline{h})=0$ and 
\[\frac{\partial E(\underline{h})}{\partial h_0}
=\frac{\partial \tilde{H}(\underline{h})}{\partial h_0}>0,\]
see Lemma \ref{lemma_H}.

If  $r(\tilde{h})\leq 0,$ then $r(h_0)\leq 0$ for each $h_0\in[\tilde{h},\underline{h}]$, 
meaning that the entropy condition holds for each $h_0$ in that interval.
The admissible connection is obtained by choosing $h_0=h_m$ and 
$\hat{h}_1=\sqrt[3]{\frac{h_m^2u_0^2(h_m)}{g}}$. 
Note that the solution $h_0=h_m$ is uniquely determined by
$h_1=\bar{h}_1(h_0)=\sqrt[3]{\frac{h_0^2u_0^2(h_0)}{g}}$,
since $h_0u_0$ is decreasing with respect to $h_0$.

Finally, $r(\tilde{h})>0$ means that there exists $\hat{h}\in(\tilde{h},\underline{h}]$ such that
$r(\hat{h})=0$, i.e.  $\sqrt[3]{\frac{\hat{h}^2u_0(\hat{h})^2}{g}}=\hat{h}-[b]$.
Since the function $r$ is decreasing, the entropy condition holds
for $h_0\in[\hat{h},\underline{h}]$. 
\noindent
Thus, $M_1$ is defined as minimum of the function $E$ on the interval
$[\max\{\tilde{h},\hat{h}\},\underline{h}]$ with $h_1=\hat{h}_1$
and $\hat{h}_1=\sqrt[3]{\frac{h_m^2u_0^2(h_m)}{g}}$.

The second possibility from Lemma \ref{lemma1} is to take $h_1=h_0-[b]$ 
if $\sqrt[3]{\frac{h_0^2u_0^2}{g}}>h_0-[b]$ for some $h_0.$
If $\hat{h}\leq\tilde{h}$, $h_0=h_m$ gives the optimal solution.
Otherwise, we have to check if it is possible that some $h_0\in[\tilde{h},\hat{h}]$
with $h_1=h_0-[b]$ will give total energy production smaller than $M_1$.

So, let us find the minimum of $E$ when $h_1=h_0-[b]$ for $h_0\in[\tilde{h},\hat{h}]$. 
The admissible connecting SDW exists if
\[1-\sqrt[3]{y}\leq \frac{[b]}{h_0}<\frac{3-\sqrt{1+8y}}{2} \quad
\text{and} \quad y=\frac{u_0^2}{gh_0}\leq 1.\]
Note that $\frac{u_0^2}{gh_0}$ is decreasing with respect to $h_0$,
so the first inequality above does not hold if $h_0> \hat{h}$. 
If it exists, $h_n\in[\tilde{h},\hat{h}]$ and $h_n\leq h_m.$ 

If both $M_1$ and $M_2$ exist, the smaller one will determine the optimal $h_0$ and $h_1.$ 

It is easy to prove that $Q\big(h_0-[b],\frac{h_0u_0}{h_0-[b]}\big)
>Q\big(\bar{h}_1(h_0),\frac{h_0u_0}{\bar{h}_1(h_0)}\big)$, 
so if $h_n=\hat{h}$, then $M_2>E(\hat{h})\Big|_{h_1=\bar{h}_1(\hat{h})}\geq M_1$. 
All numerical experiments indicate that $M_1 < M_2$; however, we were unable to prove this. 
Thus, the question of the uniqueness of the solution is still open since 
we were not able to exclude the possibility that   $M_1=M_2.$
\end{proof}

As we can see above, if $\underline{h}=[b]$, the fluid will not pass the dam since the optimal solution is $h_0=\underline{h}$ and $u_0=u_1=h_1=0$. If $\underline{h}<[b]$, then $h_0-[b]<0$ for each $h_0\in[\tilde{h},\underline{h}]$.
That implies $h_1>h_0-[b]$, since $h_1$ has to be non-negative. Thus, the entropy condition is not satisfied for $h_0\in[\tilde{h},\underline{h}]$.
\begin{corollary}
Suppose $\underline{h}<[b]$. Then, the admissible solution does not exist, i.e.,  the fluid will not pass the dam.
\end{corollary}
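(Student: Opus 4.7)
The plan is to combine the admissibility window $h_0\in[\tilde{h},\underline{h}]$ derived in the preceding theorem with the entropy condition at the bed jump, and show that under $\underline{h}<[b]$ these two constraints force $h_1<0$, which is physically impossible. Since every dam-breaking solution described by Lemma \ref{lemma:dam} requires an admissible connection $\bigl((h_0,u_0),(h_1,u_1)\bigr)$ at the jump, ruling out such a connection rules out the solution.

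First I would recall the chain of restrictions on the left adjacent state already established: the lower bound $h_0\geq\tilde{h}$ comes from non-positivity of the $S_1$ shock speed $c_1$, and the upper bound $h_0\leq\underline{h}$ comes from $u_0\geq 0$, i.e.\ from $f(h_0)\leq 0$ in (\ref{f}). Any candidate configuration compatible with Lemma \ref{lemma:dam} must therefore satisfy $h_0\in[\tilde{h},\underline{h}]$.

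Next I would invoke the entropy condition (\ref{ent}), $h_1\leq h_0-(b_1-b_0)=h_0-[b]$, together with the physical constraint $h_1\geq 0$. If $\underline{h}<[b]$, then for every admissible $h_0$,
\[
h_0-[b]\leq\underline{h}-[b]<0,
\]
so the entropy condition would force $h_1<0$, contradicting non-negativity of the fluid depth. Consequently no admissible pair $(h_0,h_1)$ exists at the jump, no admissible connection exists, and by Lemma \ref{lemma:dam} no solution with a non-trivial right-going wave exists.

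No real obstacle is anticipated; the corollary is essentially a bookkeeping argument repackaging inequalities already used inside the theorem. The only subtlety worth checking is that the degenerate endpoints, in particular the ``no-$S_1$'' configuration $h_0=h_l$ (constant left state with $u_0=u_l>0$), are covered. Since $h_l\in[\tilde{h},\underline{h}]$, the same inequality $h_l-[b]\leq\underline{h}-[b]<0$ applies, so even this limiting case is inadmissible. Hence the fluid must remain on the left-hand side of the dam.
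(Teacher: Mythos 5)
Your proposal is correct and follows essentially the same route as the paper: on the admissible interval $h_0\in[\tilde{h},\underline{h}]$ one has $h_0-[b]\leq\underline{h}-[b]<0$, so the entropy condition $h_1\leq h_0-[b]$ is incompatible with $h_1\geq 0$, and no admissible connection (hence no solution) exists. The extra check of the degenerate case $h_0=h_l$ is harmless but already subsumed in the interval argument.
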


\section*{Conclusion}

This paper presents a solution to the shallow water system with a bottom containing a single jump.
Mathematically, the problem can be viewed as a system of conservation laws with
a discontinuous flux function or as a balance law with a measure as the source function.
We have chosen the latter approach.

Physical energy plays a role analogous to mathematical entropy.
With this choice, the maximum dissipation principle has proven to be effective
in selecting an appropriate approximate solution to the problem.

This solution is a net of piecewise differentiable functions that converge to
a locally integrable function. It can be described as a singular shock solution with zero mass,
as there are no singular measures in the limit.

We focus on the dam-break Riemann problem, where the initial data are Riemann-type with
a vacuum on the right-hand side. While the approach used in this paper appears sufficient
for an arbitrary Riemann problem, the number of cases would be too large to be practical.
Therefore, we have chosen to present our idea using a simpler case.

A natural next step in investigating this system would be to introduce
a vanishing viscosity term and analyze the limit. 

\section*{Acknowledgment}
This work was supported by 
Science Fund of the Republic of Serbia, GRANT No TF C1389-YF (“FluidVarVisc”).

\noindent
Dalal Daw 

\noindent
University of Zawia, Faculty of Science (Ajelat), Libya 

\noindent
Email: d.daw@zu.edu.ly
\medskip

\noindent
Marko Nedeljkov 

\noindent
Department of Mathematics and Informatics, Faculty of Sciences, 
University of Novi Sad 

\noindent
Email: marko@dmi.uns.ac.rs

\medskip

\noindent
Sanja Ru\v zi\v ci\' c 

\noindent
Department of Mathematics and Informatics, Faculty of Sciences, 
University of Novi Sad 

\noindent
Email: sanja.ruzicic@dmi.uns.ac.rs

\end{document}